\theoremstyle{plain} 
\newtheorem{theorem}{Theorem}[section]
\newtheorem*{theorem*}{Theorem}
\newtheorem{lemma}[theorem]{Lemma}
\theoremstyle{remark}
\newtheorem{exr}{Exercise}
\theoremstyle{definition}
\renewcommand{\mod}[1]{{\ifmmode\text{\rm\ (mod~$#1$)}\else\discretionary{}{}{\hbox{ }}\rm(mod~$#1$)\fi}}
\newcommand{\ceil}[1]{\left\lceil#1\right\rceil}
\renewcommand{\Im}{\textup{Im }}
\renewcommand{\Re}{\textup{Re }}
\renewcommand{\b}{\beta}
\renewcommand{\t}{\theta}
\newcommand{\Z}{\mathbb Z}
\newcommand{\R}{\mathbb R}
\newcommand{\C}{\mathbb C}
\newcommand{\B}{\mathcal B}
\newcommand{\p}{\mathcal P}
\newcommand{\n}{\mathcal N}
\newcommand{\I}{\mathcal I}
\newcommand{\G}{\mathcal G}
\newcommand{\ignore}[1]{}
\newcommand{\half}{\tfrac{1}{2}} 
\newcommand{\fP}{\mathcal{P}}
\newcommand{\fN}{\mathcal{N}}
\newcommand{\fG}{\mathcal{G}}
\newcommand{\fB}{\mathcal{B}}
\title{Sharp bounds for Joint moments of the Riemann zeta function}
\author{Michael J. Curran}
\address{Mathematical Institute, University of Oxford, Oxford, OX2 6GG, UK.}
\email{michael.curran@maths.ox.ac.uk}
\author{André Heycock}
\address{Department of Mathematics, University of Manchester, Manchester, M13 9PL, UK.}
\email{andre.heycock@postgrad.manchester.ac.uk}
\begin{document}
\nocite{*}

\maketitle

\begin{abstract}
In \cite{curran-1}, the first author obtained conjecturally sharp upper bounds for the joint moments of the $(2k-2h)^{\text{th}}$ power of the Riemann zeta function with the $2h^{\text{th}}$ power of its derivative on the critical line in the range $1\leq k \leq 2$, $0 \leq h \leq 1$. Unconditionally, we extend these upper bounds to all $0 \leq h\leq k \leq 2$, and obtain lower bounds for all $0\leq h \leq k+1/2$. Assuming the Riemann hypothesis, we give sharp bounds for all $0\leq h \leq k$. We also prove upper bounds of the conjectured order for more general joint moments of zeta with its higher derivatives.
\end{abstract}


\section{Introduction}

This paper is concerned with the joint moments of the Riemann zeta function with its derivative on the critical line and other natural variants of these moments.
Work of Hall \cite{hall}, Hughes \cite{Hughes}, and   Keating and Snaith \cite{KS2, KS1} has led to the conjecture that for $k > -\tfrac{1}{2}$ and $-\tfrac{1}{2} < h < k + \tfrac{1}{2}$,
\begin{equation} \label{eqn:JMConjecture}
\int_{T}^{2T} |\zeta(\tfrac{1}{2} + i t)|^{2k - 2 h} \left|\zeta'(\tfrac{1}{2} + i t)\right|^{2h} \ dt \sim C(k,h) T (\log T)^{k^2 + 2h} 
\end{equation}
as $T \rightarrow \infty$ for certain constants $C(k,h)$.
These constants can be expressed as the product of a natural number theoretic factor and a factor coming from conjectural connections of the Riemann zeta function with random matrix theory.
The arithmetic factor is given by a product over primes, while the random matrix factor can be expressed using Painlevé trancendents.
For finite $N$, the random matrix factor can be expressed using the solution to a Painlevé V differential equation \cite{P5}, while the random matrix factor in the limit $N\rightarrow \infty$ is related to the solution of a Painlevé III differential equation \cite{AKW, BBBCPRS, P5, FW}.
This asymptotic (\ref{eqn:JMConjecture}) is currently only known to hold for integer $h , k$ satisfying $0 \leq h \leq k \leq 2$ due to work of Ingham \cite{Ingham} and Conrey \cite{Conrey}.

Since the asymptotic (\ref{eqn:JMConjecture}) seems out of reach for all remaining values of $h$ and $k$, it is natural to ask if we can at least obtain upper bounds of the right order.
The work of Heap, Radziwiłł, and Soundararajan \cite{heap-radziwill-soundararajan} shows that we have upper bounds of the expected order for (\ref{eqn:JMConjecture}) for $h = 0, k \leq 2$. 
In fact, lower bounds of the conjectured order are also known for all $k \geq 0$ when $h = 0$ due to work of Heap and Soundararajan \cite{heap-soundararajan}. 
By using Conrey's asymptotic for the fourth moment of the derivative of zeta \cite{Conrey} along with Hölder's inequality, one also sees that an upper bound of the expected order for (\ref{eqn:JMConjecture}) holds for $k = 2$ and $h \leq 2$.
Finally, the first author recently showed \cite{curran-1} that the an upper bound of the expected order for (\ref{eqn:JMConjecture}) holds in the larger range where $0\leq h\leq 1$ and $1\leq k \leq 2$.
The paper \cite{curran-1} uses the work of Heap, Radziwiłł, and Soundararajan \cite{arguin-ouimet-radziwill} along with the observation that if we have an upper bound of the expected order in (\ref{eqn:JMConjecture}) for a given pair $k, h$, then we also have an upper bound of the expected order for the pair $k, h'$ for any $0\leq h' \leq h$.
This latter observation is a simple consequence of Hölder's inequality and will be quite useful in this paper.\\

Our primary aim is to prove the upper bound (\ref{eqn:JMConjecture}) for all $0 \leq h \leq k \leq 2$. 
The method of proof extends straightforwardly to joint moments with higher derivatives. Assuming the Riemann hypothesis, the bound extends to all $0 \leq h \leq k$.
\begin{theorem}\label{thm:mainRH}
Let $n\in\Z_{>0}$, $0\leq k \leq 2$, and $h_j \geq 0$ for $1\leq j \leq n$, with $h:=h_1 + \cdots + h_n \leq k$. Then
\begin{equation}\label{eq:generalMM}
        \int_{T}^{2T}|\zeta(\half+it)|^{2k-2h}\prod_{j=1}^n|\zeta^{(j)}(\half+it)|^{2h_j} \ dt\ll_{k,n,h_1,\dots,h_n} T(\log T)^{k^2 + 2\sum_{j=1}^n j h_j},
    \end{equation}
    unconditionally for $k\leq 2$, and under the assumption of the Riemann hypothesis for $k>2$.
\end{theorem}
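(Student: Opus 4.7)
The plan is to reduce the joint moment to the ordinary $2k$-th moment of $|\zeta|$ in a thin strip around the critical line, by combining the Cauchy integral formula with the subharmonicity of $|\zeta|^{2k}$. The point is that the higher derivatives $\zeta^{(j)}$ only introduce powers of $\log T$ and can otherwise be replaced by $|\zeta|$ evaluated on a small disc.

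Set $r = c/\log T$ for a sufficiently small $c > 0$, and write $M(s) := \max_{|z-s| \leq r} |\zeta(z)|$. For $s = \half + it$, Cauchy's integral formula on the circle $|z - s| = r$ yields $|\zeta^{(j)}(s)| \leq (j!/r^j) M(s)$, so
\begin{equation*}
|\zeta^{(j)}(s)|^{2h_j} \ll_{j, h_j} (\log T)^{2 j h_j}\, M(s)^{2h_j}.
\end{equation*}
Multiplying over $j$ and using $|\zeta(s)| \leq M(s)$ together with the assumption $2k - 2h \geq 0$ gives the pointwise estimate
\begin{equation*}
|\zeta(s)|^{2k - 2h} \prod_{j=1}^n |\zeta^{(j)}(s)|^{2h_j} \ll (\log T)^{2 \sum_j j h_j}\, M(s)^{2k}.
\end{equation*}

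To remove the maximum, I would use that $|\zeta|^{2k}$ is subharmonic for $k \geq 0$ in any region avoiding the pole at $s=1$. If $z^* \in \overline{D}(s, r)$ attains the maximum, then the sub-mean value property applied on $D(z^*, r) \subseteq D(s, 2r)$ gives
\begin{equation*}
M(s)^{2k} \leq \frac{1}{\pi r^2} \int_{D(s, 2r)} |\zeta(w)|^{2k} \, dA(w).
\end{equation*}
Integrating over $t \in [T, 2T]$ and swapping the order of integration --- the set of $t$ for which $\half + it \in D(w, 2r)$ has measure $O(r)$ --- produces
\begin{equation*}
\int_T^{2T} M(\half + it)^{2k} \, dt \ll \frac{1}{r} \int_{\half - 2r}^{\half + 2r} \int_{T - 2r}^{2T + 2r} |\zeta(\sigma + iv)|^{2k} \, dv \, d\sigma.
\end{equation*}

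It thus suffices to establish the off-line moment bound
\begin{equation*}
\int_T^{2T} |\zeta(\sigma + iv)|^{2k} \, dv \ll_k T (\log T)^{k^2}
\end{equation*}
uniformly in $\sigma$ with $|\sigma - \half| \leq 2r$. On the critical line itself, this is due to Heap, Radziwi\l\l, and Soundararajan for $0 \leq k \leq 2$ unconditionally, and to Harper for all $k \geq 0$ under RH. Extending the bound to a strip of width $O(1/\log T)$ should be possible through a further application of the subharmonicity of $|\zeta|^{2k}$, or via standard Ramachandra-type convexity arguments. I expect this off-line extension to be the main technical step; once it is in place, substituting back and multiplying by the factor $(\log T)^{2 \sum_j j h_j}$ from the Cauchy estimate immediately yields the claimed bound.
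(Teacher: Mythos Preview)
Your argument is correct and takes a genuinely different, more elementary route than the paper. The paper splits into two overlapping cases: for $k\geq \tfrac12$ it represents $\zeta^{(n)}$ by a Cauchy integral and applies H\"older's inequality on the circle --- which requires convexity of $x\mapsto x^{2k}$ and hence fails for $k<\tfrac12$ --- and for $0\leq k\leq 2$ it runs a separate Heap--Radziwi\l\l--Soundararajan style argument involving Dirichlet polynomial approximations and a twisted fourth moment of $\zeta^{(n)}$. Your key observation is that the pointwise Cauchy bound $|\zeta^{(j)}(s)|\leq (j!/r^j)M(s)$ followed by the sub-mean-value inequality for $|\zeta|^{2k}$ (which is subharmonic for \emph{every} $k>0$, since $\log|\zeta|$ is subharmonic and $x\mapsto e^{2kx}$ is convex increasing) removes the maximum at the cost of an area average, and this sidesteps the convexity obstruction entirely. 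The result is a single argument covering all $k\geq 0$, with the twisted-moment machinery of the paper's Section~3 rendered unnecessary.

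Both approaches ultimately rest on the same off-line input
\[
\int_T^{2T}|\zeta(\sigma+iv)|^{2k}\,dv \ll_k T(\log T)^{k^2}\qquad\text{uniformly for }|\sigma-\tfrac12|\ll 1/\log T,
\]
which the paper obtains (in its Section~4) exactly as you suggest: the functional equation reduces $\sigma<\tfrac12$ to $\sigma>\tfrac12$, and then Gabriel's convexity theorem interpolates between the critical-line bound of Heap--Radziwi\l\l--Soundararajan (or Harper under RH) and a trivial bound further right. So the step you flag as ``the main technical step'' is indeed available in the literature, and your outline is complete. The paper's Section~3 approach, while heavier, has the advantage of being self-contained on the critical line and may adapt more readily to settings where good off-line moment bounds are not already known; your approach buys brevity and a unified treatment of all $k$.
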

\noindent
The upper bound in Theorem \ref{thm:mainRH} is the same order of magnitude one would predict using random matrix theory. The case of integral joint moments of a product of two derivatives of characteristic random unitary matrices has been extensively studied by Keating and Wei \cite{keating-wei-1,keating-wei-2}.\\

When $n=1$, Theorem \ref{thm:mainRH} 
provides an upper bound of the expected order in (\ref{eqn:JMConjecture}) for all $0 \leq h \leq k$, assuming the Riemann hypothesis. We also obtain unconditional lower bounds for all $k \geq 0$ with $0\leq h < k + \tfrac{1}{2}$, so the upper bounds in (\ref{eqn:JMConjecture}) are of the correct order. 
\begin{theorem} \label{thm:lowerJM}
Let $k \geq 0$ and $0\leq h < k + \tfrac{1}{2}$. Then
\[
\int_T^{2T} |\zeta(\tfrac{1}{2} + i t)|^{2k - 2h} |\zeta'(\tfrac{1}{2} + i t)|^{2h} \ d t \gg_{k,h} T (\log T)^{k^2 + 2 h}.
\]
\end{theorem}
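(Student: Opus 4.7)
Our plan is to adapt the lower-bound strategy of Heap and Soundararajan \cite{heap-soundararajan} to the joint-moment setting. Write $k' := k - h$ and suppose first that $k' > 1$ (the ranges $0 < k' \leq 1$ and $-\tfrac{1}{2} < k' < 0$ require only minor modifications to the mollifier and to the Hölder exponents below). Let $X = T^{1/(\log\log T)^{2}}$ and partition the primes $\leq X$ into disjoint dyadic blocks $\mathcal{I}_{j}$. Set $\mathcal{P}_{j}(s) = \sum_{p \in \mathcal{I}_{j}} p^{-s}$ and define the mollifier
\[
\mathcal{M}(s) = \prod_{j} \sum_{r=0}^{K_{j}} \frac{((k'-1)\mathcal{P}_{j}(s))^{r}}{r!},
\]
with $K_{j}$ chosen so that $\mathcal{M}$ is a Dirichlet polynomial of length $\leq T^{1-\delta}$ for some small $\delta > 0$. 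Then $\mathcal{M}(\tfrac{1}{2}+it)$ serves as a surrogate for $\zeta(\tfrac{1}{2}+it)^{k'-1}$ on the ``typical'' set of $t$.

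The key step is Hölder's inequality with exponents $p = k'$ and $q = k'/(k'-1)$. Factoring $|\zeta|^{2}|\zeta'|^{2h}|\mathcal{M}|^{2} = (|\zeta|^{2k-2h}|\zeta'|^{2h})^{1/p}(|\zeta'|^{2h}|\mathcal{M}|^{2q})^{1/q}$ yields
\[
\int_{T}^{2T}|\zeta|^{2}|\zeta'|^{2h}|\mathcal{M}|^{2}\,dt \leq \left(\int_{T}^{2T}|\zeta|^{2k-2h}|\zeta'|^{2h}\,dt\right)^{1/k'}\left(\int_{T}^{2T}|\zeta'|^{2h}|\mathcal{M}|^{2q}\,dt\right)^{(k'-1)/k'}.
\]
Rearranging reduces the theorem to establishing (a) $\int_T^{2T}|\zeta|^2|\zeta'|^{2h}|\mathcal{M}|^2\,dt \gg T(\log T)^{k^2+2h}$ and (b) $\int_T^{2T}|\zeta'|^{2h}|\mathcal{M}|^{2q}\,dt \ll T(\log T)^{k^2+2h}$.

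Estimate (b) is handled by a further application of Hölder splitting $|\mathcal{M}|^{2q}$ from $|\zeta'|^{2h}$, combined with the Euler-product moment bound of \cite{heap-soundararajan} giving $\int|\mathcal{M}|^{2q(1+\epsilon)} \ll T(\log T)^{(k-h)^{2}}$, together with a moment bound for $|\zeta'|^{2h}$ obtained by interpolating between Conrey's fourth moment of $\zeta'$ \cite{Conrey} and the standard second moment. The main technical step is (a): the twisted second moment of $\zeta$ weighted by $|\mathcal{M}|^{2}|\zeta'|^{2h}$. For integer $h$, one expands $|\zeta'|^{2h} = (\zeta')^{h}\overline{(\zeta')^{h}}$ as a Dirichlet polynomial and reduces to a twisted mean value of $|\zeta|^{2}$ against a longer Dirichlet polynomial, which is computed by Ingham-type asymptotics. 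For general real $h \in [0, k+\tfrac{1}{2})$, we approximate $|\zeta'|^{2h} \approx |\zeta|^{2h}|\mathcal{L}|^{2h}$, where $\mathcal{L}(s) = -\sum_{p \leq Y}\log p/p^{s}$ models $\zeta'/\zeta$, and exploit the fact that $|\mathcal{L}(\tfrac{1}{2}+it)|$ has typical magnitude $\log T$ to extract a factor of $(\log T)^{2h}$ and reduce to a twisted second moment of $|\zeta|^{2+2h}|\mathcal{M}|^{2}$ of the type treated in \cite{heap-soundararajan}. The principal obstacle is to carry out this reduction rigorously for non-integer $h$, controlling the exceptional set on which $|\mathcal{L}(\tfrac{1}{2}+it)|$ is atypically small and handling the joint distribution of $\zeta$, $\mathcal{M}$, and $\mathcal{L}$.
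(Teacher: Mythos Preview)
Your proposal is vastly more elaborate than what is needed, and it contains a genuine gap that you yourself identify but do not close. The paper's proof is three lines: write
\[
|\zeta(\tfrac{1}{2}+it)|^{2k-2h}|\zeta'(\tfrac{1}{2}+it)|^{2h} = |\zeta(\tfrac{1}{2}+it)|^{2k}\left|\frac{\zeta'}{\zeta}(\tfrac{1}{2}+it)\right|^{2h},
\]
observe from the functional equation and Stirling that $\Re\tfrac{\zeta'}{\zeta}(\tfrac{1}{2}+it) = -\tfrac{1}{2}\log(t/2\pi) + O(|t|^{-1})$, so that $|\zeta'/\zeta(\tfrac{1}{2}+it)| \gg \log T$ \emph{pointwise} for $t\in[T,2T]$, and then apply the Heap--Soundararajan lower bound $\int_T^{2T}|\zeta(\tfrac{1}{2}+it)|^{2k}\,dt \gg T(\log T)^{k^2}$. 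No mollifier, no H\"older, no twisted moments are required; the entire factor $(\log T)^{2h}$ comes for free from the deterministic size of $\Re(\zeta'/\zeta)$ on the critical line. This is essentially Conrey's argument from \cite{Conrey}.

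Your route, by contrast, tries to build the extra $(\log T)^{2h}$ out of a Dirichlet polynomial $\mathcal{L}(s) = -\sum_{p\le Y}(\log p)p^{-s}$ modeling $\zeta'/\zeta$. This is exactly the wrong model on the critical line: $\zeta'/\zeta$ has poles at every zero of $\zeta$, so no short Dirichlet polynomial approximates it pointwise, and the ``exceptional set'' you mention is not small in any sense useful for a lower bound. You flag this as ``the principal obstacle'' but offer no mechanism to overcome it; as written, step (a) for non-integer $h$ is not a proof sketch but an open problem. Step (b) is also not sharp as stated: splitting $|\zeta'|^{2h}$ from $|\mathcal{M}|^{2q}$ by H\"older and bounding each factor separately will in general overshoot the target exponent $k^2+2h$, since H\"older is lossy unless the two factors are essentially proportional. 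The functional-equation identity above bypasses all of this.
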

\noindent
Lower bounds for joint moments involving higher derivatives will be considered in future work of the second author.

We will begin by showing that Theorem \ref{thm:lowerJM} is a short consequence of the lower bounds for the $2k^\text{th}$ moments of zeta for $k \geq 0$ obtained by Heap and Soundararajan \cite{heap-soundararajan}.
The argument we give is essentially the same argument used in Conrey's \cite{Conrey} proof of the explicit lower bound
\[
\int_T^{2T} |\zeta(\tfrac{1}{2} + i t)|^{3} |\zeta'(\tfrac{1}{2} + i t)| \ d t \geq (1 + o(1)) \frac{173}{672\pi^2} T (\log T)^5.
\]
Since obtaining good implicit constants is not our concern in this paper, a slightly simpler version of the argument in \cite{Conrey} will suffice.

We will then move onto proving the upper bounds.
Using Hölder's inequality, we will demonstrate that it suffices to prove Theorem \ref{thm:mainRH} in the case of a single moment,
\begin{equation*}
    \I_T^{(n)}(k):=\int_T^{2T}|\zeta^{(n)}(\half+it)|^{2k} dt
\end{equation*}
where $n\in\Z_{\geq 0}$, $k\geq 0$.
The proof then splits into two partially overlapping cases. If $k \geq 1/2$, one can represent the derivative of zeta by a contour integral and can use Hölder's or Jensen's inequality to essentially bound (\ref{eqn:JMConjecture}) by $(\log T)^{2nk}$ times the $2k^\text{th}$ moment of zeta.
The argument here is similar to one in work of Milinovich \cite{Mili} on moments of derivatives of zeta, and the proof fails for small $k$ because $x \mapsto x^{2k}$ is not convex when $k < 1/2$.

Our argument in the range $0\leq k \leq 2$ is similar to work of Heap, Radziwiłł, and Soundararajan \cite{heap-radziwill-soundararajan}. However, we initially apply Holder's inequality, instead of Young's inequality, to obtain a sharp bound for $|\zeta^{(n)}(\half+it)|^{2k}$ averaged over a set $S\subseteq[T,2T]$ in terms of a product of the mean value of $|\zeta^{(n)}(\half+it)|^4$ times the exponential of a Dirichlet polynomial and the mean value of the exponential of another Dirichlet polynomial. Splitting $[T,2T]$ into subsets based on the longest permissible Dirichlet polynomial approximation to $\log|\zeta(\half+it)|$, one then bounds the exponentials of Dirichlet polynomials by truncating the exponential to a Taylor polynomial. This reduces the theorem to a variety of twisted moment calculations. For Dirichlet polynomials, these have already been essentially computed in \cite{heap-radziwill-soundararajan}. For twisted fourth moments of higher derivatives of the Riemann zeta function, we adapt the method of \cite[Lem. 2]{curran-1}.
 
\section*{Acknowledgements}
\noindent
Part of this work was written during a visit by the first author to the University of Manchester. He would like to thank Hung Bui and that institution for their hospitality and support, as well as his advisor Jon Keating for useful comments. The second author would like to thank his advisor, Hung Bui, for helpful discussions. Finally both authors would like to thank Winston Heap for comments on an early draft of the paper, and to thank Soundararajan for bringing the argument in \cite{Conrey} to their attention.
\section{Lower Bounds: Proof of Theorem \ref{thm:lowerJM}}

We begin by writing
\[
\int_T^{2T}|\zeta(\tfrac{1}{2} + i t)|^{2k - 2h} |\zeta'(\tfrac{1}{2} + i t)|^{2h} \ d t =\int_T^{2T}|\zeta(\tfrac{1}{2} + i t)|^{2k} \left(\left|\Re \frac{\zeta'}{\zeta}(\tfrac{1}{2}+ i t)\right|^2+ \left|\Im \frac{\zeta'}{\zeta}(\tfrac{1}{2}+ i t)\right|^2\right)^{h} \ d t.
\]
It is a standard consequence of Stirling's approximation and the functional equation that for large $T$,
\[
\Re \frac{\zeta'}{\zeta}(\tfrac{1}{2}+ i t) = -\frac{1}{2}\log\frac{t}{2\pi} + O(|t|^{-1}).
\]
Therefore for large $T$,
\[
\int_T^{2T}|\zeta(\tfrac{1}{2} + i t)|^{2k - 2h} |\zeta'(\tfrac{1}{2} + i t)|^{2h} \ d t \gg_{k,h} (\log T)^{2h} \int_T^{2T}|\zeta(\tfrac{1}{2} + i t)|^{2k}  \ d t.
\]
Theorem \ref{thm:lowerJM} now follows after an application of the main theorem in \cite{heap-soundararajan}, that the integral on the right-hand side above is $\gg_{k} T(\log T)^{k^2}$. \qed
\section{Upper Bounds: Proof of Theorem \ref{thm:mainRH} for $0\leq k \leq 2$}

\subsection{Initial reduction}
First apply H\"older's inequality to separate out each derivative:
    \begin{multline}\label{eq:RH_Thm_FirstHolder}
        \int_{T}^{2T}|\zeta(\half+it)|^{2k-2h}\prod_{j=1}^n|\zeta^{(j)}(\half+it)|^{2h_j} \ dt \\
         \leq \left(\int_{T}^{2T}|\zeta(\half+it)|^{2k}\ dt\right)^{1-h/k} \times \prod_{j=1}^n \left(\int_T^{2T} |\zeta^{(j)}(\half+it)|^{2k} \ dt \right)^{h_j/k}.
    \end{multline}
Theorem \ref{thm:mainRH} follows once we show for all $n\in\Z_{>0}$ that
    \begin{equation}\label{eq:RH_Thm.1}
        \I^{(n)}_T(k):=\int_{T}^{2T} |\zeta^{(n)}(\half+it)|^{2k}\ dt\ll_{k,n} T(\log T)^{k^2+2nk},
    \end{equation}
unconditionally for $k\leq 2$ and on RH for $k>2$. Going forward we will omit the subscripts in our asymptotic notation, so the implicit constants may depend on $k$ and $n$. 
Note that the case $n=0$ is proved in \cite{heap-radziwill-soundararajan} unconditionally for $k\leq 2$ and \cite{harper} on RH. The remainder of this section completes the proof when $0\leq k \leq 2$.

\subsection{Notation} 
Throughout, our parameters will depend on the value of $k$.
Let $\log_j$ denote the $j$-fold iterated logarithm, set $T_0 = 1$,
\[
T_j := \exp\left(\frac{\log T}{(\log_{j+1} T)^2}\right)
\]
for $j \geq 1$, and 
\[
J := \max\left\{j : \log_j T \geq 10^4 \right\}.
\]
For $1\leq j \leq J$ set
\[
\p_j(s) := \sum_{T_{j-1} < p \leq T_{j} } \frac{1}{p^s}, \quad P_j := \sum_{T_{j-1} < p \leq T_{j} } \frac{1}{p}.
\]
Next define the Dirichlet polynomials
\[
\n_j(s;\b) = \sum_{\substack{p \mid n \Rightarrow p\in (T_{j-1},T_j] \\ \Omega(n) \leq 10 K_j}} \frac{\b^{\Omega(n)} g(n)}{n^s},
\]
where $|\b| \leq 2$,
\[
K_j := 50 P_j, \  g(n) = \prod_{p^r \| n}\tfrac{1}{r!},\ \text{ and } \Omega(n) = \sum_{p^r \| n} r.
\]
Note $\n_j(s;\b)$ is a Dirichlet polynomial of length $\leq T_j^{500 P_j}$, so $\prod_{j \leq J}\n_j(s;\b)$ has length at most $T_1^{500 P_1} T_2^{500 P_2} \cdots T_J^{500  P_J} \leq T^{1/10}$.

We will decompose the interval $[T,2T]$ depending on the sizes of the $\p_j(s)$. Define the good set
\[
\G := \left\{t\in [T,2T] : |\p_j(\tfrac{1}{2} + i t)| \leq K_j \text{ for all } j \leq J \right\},
\]
and the bad sets
\[
\B_r := \{t\in [T,2T]: |\p_j (\tfrac{1}{2}+ i t)| \leq K_j \text{ for all } 1\leq j < r \text{ but } |\p_{r} (\tfrac{1}{2} + i t)|  > K_{r} \}.
\]
A similar decomposition occurs in the work of Harper \cite{harper} and also occurs implicitly in the work of Radziwiłł and Soundararajan \cite{radziwill-soundararajan} and related papers \cite{curran-1, heap-radziwill-soundararajan, heap-soundararajan}.
In our argument, it will be important to keep careful track of which values $t$ are good or bad because we will use an interpolation inequality separately on each set of our partition of $[T,2T]$.
Throughout we will use the following lemmata. The second is essentially contained in Proposition 1 of \cite{heap-radziwill-soundararajan}, while the first is an analogue of their use of Young's inequality there.

\begin{lemma}\label{lem:interpol}
For $0\leq \b \leq 2$, $r \leq J +1$, and $S\subseteq[T,2T]$  measurable
\begin{equation*}
    \begin{split}
        \int_S |\zeta(\half+it)|^{2\b}\ dt &\ll \left(\int_S |\zeta(\half+it)|^4 \exp\left((2\b-4) \sum_{j < r} \Re \p_j(\half+it)\right)\ dt \right)^{\beta/2}\\
        &\qquad\times\left(\int_S \exp\left(2\b \sum_{j < r} \Re \p_j(\half+it)\right)\ dt\right)^{1-\beta/2}.
    \end{split}
\end{equation*}
\end{lemma}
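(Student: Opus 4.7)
I would prove this as a direct application of H\"older's inequality, which plays the role of the ``multiplicative'' analogue of the Young's inequality step in Proposition 1 of \cite{heap-radziwill-soundararajan}. Write $D(t) := \sum_{j<r} \Re \p_j(\tfrac{1}{2}+it)$ for brevity. The starting observation is the pointwise algebraic identity
\[
|\zeta(\tfrac{1}{2}+it)|^{2\beta} \;=\; \bigl(|\zeta(\tfrac{1}{2}+it)|^{4}\, e^{(2\beta-4)D(t)}\bigr)^{\beta/2}\,\bigl(e^{2\beta D(t)}\bigr)^{1-\beta/2},
\]
which is immediate from exponent arithmetic: the $|\zeta|$-exponent on the right is $4 \cdot \beta/2 = 2\beta$, while the two exponential exponents cancel since $(2\beta-4)\cdot\beta/2 + 2\beta\cdot(1-\beta/2) = \beta^2-2\beta+2\beta-\beta^2=0$.

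Integrating this identity over $S$ and applying H\"older's inequality with the conjugate exponents $p = 2/\beta$ and $q = 2/(2-\beta)$ (valid for $0<\beta<2$) gives exactly the claimed inequality, in fact with implicit constant~$1$. The endpoint cases $\beta=0$ and $\beta=2$ are trivial: the right-hand side collapses to $|S|$ and to $\int_S|\zeta(\tfrac12+it)|^4\,dt$ respectively, and in each case the inequality reduces to an identity.

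There is no genuine obstacle in this argument; the entire content of the lemma is the choice of the multiplicative decomposition of $|\zeta|^{2\beta}$ above, engineered so that the two factors, when integrated, reproduce the two quantities appearing on the right-hand side of the displayed bound. The parameter $r$ is inert at this stage — the argument would go through identically if $D(t)$ were replaced by any real-valued function of $t$ — and $r$ only becomes relevant later when the lemma is applied to each piece of the partition $[T,2T] = \G \sqcup \bigsqcup_r \B_r$ with a value of $r$ tailored to the piece.
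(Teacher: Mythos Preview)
Your proof is correct and is essentially identical to the paper's: the paper writes the same decomposition as $|\zeta|^{2\beta}=\bigl(|\zeta|^{2\beta}e^{\beta(\beta-2)V(t)}\bigr)\cdot e^{\beta(2-\beta)V(t)}$ and applies H\"older with the same exponents $p=2/\beta$, $q=2/(2-\beta)$, which unwinds to exactly your pointwise identity. Your treatment of the endpoints $\beta=0,2$ and your remark that $r$ (equivalently $D(t)$) is inert at this stage are accurate additions.
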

\begin{proof}
    For convenience let $V(t) = \sum_{j<r}\Re \fP_j(\half+it)$, and write
    \begin{align*}
        \int_S |\zeta(\half+it)|^{2\beta} \ dt = \int_S |\zeta(\half+it)|^{2\beta}\exp\left(\beta(\beta-2)V(t)\right)\cdot\exp\left(\beta(2-\beta)V(t)\right)\ dt.
    \end{align*}
    The claim follows by applying H\"older's inequality with exponent $p=2/\beta$ for the first factor $|\zeta(\half+it)|^{2\beta}\exp(\beta(\beta-2)V(t))$ and $q=2/(2-\beta)$ for the second factor.
\end{proof}

\begin{lemma}\label{lem:taylorExp}
If $|\b| \leq 2$, $j \leq J$ and $|\p_j(s) | \leq K_j$, then
\[
\exp\left(2\b \Re \p_j(s)\right) = (1 + O(e^{-P_j})) |\n_j(s; \b)|^2.
\]
\end{lemma}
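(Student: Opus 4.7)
The plan is to identify $\n_j(s;\beta)$ as a truncated Taylor series for $\exp(\beta \p_j(s))$ and then estimate the tail using Stirling's formula, exploiting the gap between the truncation level $10K_j$ and the hypothesized bound $K_j$ on $|\p_j(s)|$.

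First I would unpack the combinatorics. Expanding $\p_j(s)^m$ by the multinomial theorem and grouping terms according to the product $n=p_1\cdots p_m$ gives
\[
\frac{(\beta\p_j(s))^m}{m!} \;=\; \sum_{\substack{p\mid n\Rightarrow p\in(T_{j-1},T_j]\\ \Omega(n)=m}} \frac{\beta^{\Omega(n)} g(n)}{n^s},
\]
since the number of ordered factorizations of $n$ as a product of $m$ primes equals $m!/\prod_{p^r\|n} r! = m!\,g(n)$. Summing this over $0\le m\le 10K_j$ recovers exactly $\n_j(s;\beta)$, so $\n_j(s;\beta)$ is precisely the degree-$10K_j$ Taylor polynomial (in $\beta\p_j(s)$) of $\exp(\beta\p_j(s))$.

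Next I would bound the tail. Using $|\beta\p_j(s)|\le 2K_j$ and the elementary inequality $m!\ge (m/e)^m$,
\[
\bigl|\exp(\beta \p_j(s)) - \n_j(s;\beta)\bigr| \;\le\; \sum_{m>10K_j}\frac{(2K_j)^m}{m!} \;\le\; \sum_{m>10K_j}\Bigl(\frac{2eK_j}{m}\Bigr)^m \;\le\; \sum_{m>10K_j}\Bigl(\frac{e}{5}\Bigr)^m \;\ll\; e^{-cK_j}
\]
for an absolute constant $c>0$ (concretely $c=10(\log 5 - 1)>6$). On the other hand, since $|\beta\p_j(s)|\le 2K_j$, we have the lower bound $|\exp(\beta\p_j(s))|\ge e^{-2K_j}$. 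Because $c>2$, dividing shows
\[
\exp(\beta\p_j(s)) \;=\; \n_j(s;\beta)\bigl(1 + O(e^{-(c-2)K_j})\bigr) \;=\; \n_j(s;\beta)\bigl(1+O(e^{-P_j})\bigr),
\]
using $K_j = 50 P_j$ to absorb the exponent into $e^{-P_j}$ with room to spare.

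Finally, assuming $\beta$ is real (as it will be in the applications of this lemma), taking absolute values squared converts the left-hand side into $|\exp(\beta\p_j(s))|^2 = \exp(2\beta\Re\p_j(s))$, and the identity $|1+O(e^{-P_j})|^2 = 1 + O(e^{-P_j})$ yields the claim. The only place one has to exercise mild care is step two, verifying that the Stirling bound kicks in with constant strictly larger than the $e^{-2K_j}$ lower bound; everything else is essentially bookkeeping around the definitions of $K_j$ and $P_j$.
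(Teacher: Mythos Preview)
Your proof is correct and follows essentially the same route as the paper: identify $\n_j(s;\beta)$ as the degree-$10K_j$ Taylor polynomial of $\exp(\beta\p_j(s))$, bound the tail via Stirling, and convert the additive error to a multiplicative one using the lower bound $|\exp(\beta\p_j(s))|\ge e^{-2K_j}$. You are in fact more explicit than the paper in two places---the multinomial identification of $\n_j$ with the Taylor polynomial, and the observation that the final step requires $\beta$ real (which holds in all applications)---but the underlying argument is the same.
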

\begin{proof}
By Taylor expansion of the exponential up to the $10 K_j^\text{th}$ term we find
\[
\left|\exp(\b \p_j(s)) - \n_j(s;\b)\right| \leq e^{-10  K_j}
\]
since $|\b \p_j(s)| \leq  2 K_j$.
Therefore
\[
\exp\left(2\b \Re \p_j(s)\right) = |\exp(\b \p_j(s))|^2 = |\n_j(s;\b) + \theta e^{-10 K_j}|^2
\]
where $\t$ denotes a quantity depending on $s$ with $|\t|\leq 1$.
Finally by assumption $|\exp(\b \p_j(s))| \geq e^{- K_j}$ and therefore also $|\n_j(s;\b)| \geq e^{- K_j/2}$, so we may turn the additive error into a multiplicative error.
\end{proof}

\subsection{Reduction to twisted moments}

Theorem \ref{thm:mainRH} will follow from the bounds
\begin{equation}\label{eq:zeta_UB_good}
    \int_{\fG} |\zeta^{(n)}(\half+it)|^{2k} \ dt \ll T(\log T)^{k^2+2nk}
\end{equation}
and
\begin{equation}\label{eq:zeta_UB_bad}
    \int_{\fB_r} |\zeta^{(n)}(\half+it)|^{2k} \ dt \ll T(\log T)^{k^2+2nk}e^{-10P_r}
\end{equation}
after summing over $1\leq r \leq J$.

For $t\in\fG$, applying Lemma \ref{lem:interpol} and then Lemma \ref{lem:taylorExp} with $r=J+1$ yields 
\begin{equation}\label{eq:UB_twisted_good}
    \begin{split}
    \int_\fG|\zeta^{(n)}(\half+it)|^{2k}\ d t
    &\ll \left(\int_{\fG} |\zeta^{(n)}(\half+it)|^4\prod_{j < J+1}|\fN_{j}(\half+it,k-2)|^2\ d t\right)^{\frac{k}{2}}\\
    &\qquad\times\left( \int_{\fG} \prod_{j < J+1}|\fN_{j}(\half+it,k)|^2\ d t\right)^{1-\frac{k}{2}}.
    \end{split}
\end{equation}
Here we have used that $\prod_{j < J+1} (1 + O(e^{-P_j})) = O(1)$.
The range of integration for each integral on the right-hand side can then be extended to $[T,2T]$ since both integrands are non-negative.

The moments over the sets $\B_r$ for $1\leq r \leq J$ are handled in a completely analogous manner --- the only differences are that we apply Lemma \ref{lem:taylorExp} with the sum truncated at $r$ instead of $J+1$ and that we multiply the integrand by $|\p_r(s)/(50 P_r)|^{2\ceil{50P_r}}$, which majorises the indicator function of $\B_r$. Finally, we extend the range of integration to all of $[T,2T]$ and obtain
\begin{equation}\label{eq:UB_twisted_bad}
    \begin{split}
    \int_{\fB_r}|\zeta^{(n)}(\half+it)|^{2k} \ d t
    & \ll \left(\int_{T}^{2T} |\zeta^{(n)}(\half+it)|^4\prod_{j < r}\left|\fN_{j}(\half+it,k-2)\right|^2\left|\frac{\fP_r(\half+it)}{50P_r}\right|^{2\lceil 50 P_r \rceil} \ d t\right)^{\frac{k}{2}}\\
    &\qquad \times\left(\int_{T}^{2T} \prod_{j < r}\left|\fN_{j}(\half+it,k)\right|^2\left|\frac{\fP_r(\half+it)}{50P_r}\right|^{2\lceil 50 P_r \rceil} \ d t\right)^{1-\frac{k}{2}}.
    \end{split}
\end{equation}
Thus the problem is reduced to a handful of twisted moment or mean value calculations.

\subsection{Twisted fourth moment bound for $\zeta^{(n)}(\half+it)$}

To bound the first integrands on the right-hand sides of \eqref{eq:UB_twisted_good} and \eqref{eq:UB_twisted_bad}, we will require a bound for the twisted fourth moment of $\zeta(\half+it)$.
\begin{lemma}\label{lem:general_twisted_4th}
    Let $n\in\Z_{>0}$, and let $A(s):=\sum_{\ell\leq L}a_\ell \ell^{-s}$ with $a_\ell\in\C$ be a Dirichlet polynomial of length $L \ll  T^{\theta}$, $\theta<1/4$. Then
    \begin{equation*}
        \int_T^{2T}|\zeta^{(n)}(\half+it)|^4|A(\half+it)|^2\ d t\ll T(\log T)^{4 + 4n}\max_{|z_j|=\frac{3^j}{\log T}}|G(z_1,z_2,z_3,z_4)|,
    \end{equation*}
    where
    \begin{align*}
        G(z_1,z_2,z_3,z_4)&:=\sum_{h,k\leq L}\frac{a_h\overline{a_k}}{[h,k]}B_{z_1,z_2,z_3,z_4}\left(\frac{h}{(h,k)}\right)B_{z_3,z_4,z_1,z_2}\left(\frac{k}{(h,k)}\right),\\
        A(z_1,\dots,z_4) :=& \frac{\zeta(1+z_1+z_3)\zeta(1+z_1+z_4)\zeta(1+z_2+z_3)\zeta(1+z_2+z_4)}{\zeta(2+z_1+z_2+z_3+z_4)},\\
        B_{z_1,\dots,z_4}&(n) :=\prod_{p^m\| n} \left(\frac{\sum_{j\geq 0} p^{-j}\sigma_{z_1,z_2}(p^{j+m})\sigma_{z_3,z_4}(p^j)}{\sum_{j\geq 0}p^{-j}\sigma_{z_1,z_2}(p^j)\sigma_{z_3,z_4}(p^j)}\right),\\
        &\quad \Delta(z_1,\dots,z_4) := \prod_{1\leq j<\ell\leq 4}(z_j-z_\ell).
    \end{align*}
\end{lemma}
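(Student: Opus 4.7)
The strategy is to reduce to a twisted \emph{shifted} fourth moment of $\zeta$ itself by extracting each derivative via Cauchy's integral formula, adapting the approach of Lemma~2 in \cite{curran-1} (which handles the $n=0$ case).

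By Cauchy's integral formula, for any $r>0$,
\[
\zeta^{(n)}(\half+it)=\frac{n!}{2\pi i}\oint_{|z|=r}\frac{\zeta(\half+z+it)}{z^{n+1}}\,dz.
\]
Writing $|\zeta^{(n)}(\half+it)|^4=\zeta^{(n)}(\half+it)^2\,\zeta^{(n)}(\half-it)^2$ and applying this representation to each of the four factors on circles $|z_j|=3^j/\log T$, then multiplying by $|A(\half+it)|^2$ and swapping the order of integration via Fubini, the left-hand side of the lemma becomes
\[
\frac{(n!)^4}{(2\pi)^4}\oint\!\oint\!\oint\!\oint \frac{M(z_1,z_2,z_3,z_4)}{\prod_{j=1}^4 z_j^{n+1}}\,dz_1\,dz_2\,dz_3\,dz_4,
\]
where
\[
M(z_1,z_2,z_3,z_4):=\int_T^{2T}\zeta(\half+z_1+it)\zeta(\half+z_2+it)\zeta(\half+z_3-it)\zeta(\half+z_4-it)\,|A(\half+it)|^2\,dt.
\]
The geometric spacing of the radii keeps the four shifts well separated from each other, which will be convenient when the asymptotic for $M$ is invoked.

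The main input is an asymptotic formula for $M$ valid for $L\ll T^{\theta}$ with $\theta<1/4$ and $|z_j|\ll 1/\log T$; such a formula is essentially due to Bettin--Chandee--Radziwill, with the variant used in Lemma~2 of \cite{curran-1} being the most convenient. It takes the shape $M(z_1,\dots,z_4)=T\cdot\Psi(z_1,\dots,z_4)+O(T^{1-\delta})$, where $\Psi$ is a sum of ``swap'' terms combining the arithmetic factors $B_{z_1,z_2,z_3,z_4}$ with products of $\zeta(1+z_i+z_j)$. On the specified contours each such $\zeta(1+z_i+z_j)$ is of size $O(\log T)$, and direct bookkeeping yields $|\Psi(\vec z)|\ll (\log T)^4\,|G(\vec z)|$ with $G$ exactly as in the statement. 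I then bound the quadruple contour integral by maximum modulus: each contour has length $\asymp 1/\log T$ and the integrand carries a factor $z_j^{-(n+1)}$, so each contour contributes a factor $\asymp (\log T)^n$. Combined with the $T(\log T)^4$ from the asymptotic for $M$, this yields the claimed bound $T(\log T)^{4+4n}\max|G|$.

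The main obstacle is the twisted fourth moment asymptotic itself: pushing the length $L$ up to $T^{\theta}$ with $\theta<1/4$ is the state of the art and requires the delta-method together with Kloosterman-fraction techniques of Bettin--Chandee--Radziwill. The remaining ingredients (Cauchy's formula, Fubini, max modulus, and the geometric choice of radii) are routine bookkeeping, and the only novelty relative to \cite{curran-1} is allowing the $n$-th derivative, which is handled transparently by the single extra factor $z_j^{-(n+1)}$ in the Cauchy kernel.
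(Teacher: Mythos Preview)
Your approach is morally the same as the paper's—both extract the $n$th derivative from the twisted shifted fourth-moment asymptotic and then bound a contour integral—but the organization differs in a way worth flagging. The paper does \emph{not} apply Cauchy's formula to each factor of $\zeta^{(n)}$. Instead it starts from the contour-integral representation of the shifted moment $I_T(\alpha_1,\dots,\alpha_4)$ (your $M$) itself, which already packages all six swap terms into a single integrand of the shape
\[
A(z_1,z_2,-z_3,-z_4)\,G(z_1,z_2,-z_3,-z_4)\,\Delta(z_1,\dots,z_4)^2\cdot\prod_{j}\frac{\tau^{-\alpha_j/2}}{(z_1-\alpha_j)(z_2-\alpha_j)(z_3+\alpha_j)(z_4+\alpha_j)},
\]
with the $z_j$-contours at radii $3^j/\log T$. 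It then differentiates $n$ times in each $\alpha_j$, sets $\alpha_j=0$, and shows the differentiated $\alpha$-kernel is $\ll(\log T)^{4+n}$; combined with $A\ll(\log T)^4$, $\Delta^2\ll(\log T)^{-12}$, and the contour lengths, this gives the stated bound. The payoff of this route is that only the single $G(z_1,z_2,-z_3,-z_4)$ ever appears, so the conclusion is literally $\max_{|z_j|=3^j/\log T}|G|$ as in the lemma.

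Your Cauchy-kernel route is correct and arguably more transparent, but the sentence ``direct bookkeeping yields $|\Psi(\vec z)|\ll(\log T)^4\,|G(\vec z)|$'' glosses over a point: $\Psi$ is a sum of six swap terms, each carrying $G$ evaluated at a \emph{permuted and sign-flipped} quadruple of the $z_j$, so what you naturally obtain is $\max_{\sigma}\max_{|z_j|=3^j/\log T}|G(z_{\sigma(1)},\dots)|$ rather than the specific max in the statement. This is harmless for the applications in the paper (the later estimates on $G$ hold uniformly for all $|z_j|\ll 1/\log T$, not just the particular radii), but to match the lemma as written you would either need to enlarge the max in the statement or route through the packaged contour-integral form as the paper does.
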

\begin{proof}
    This adapts the proof of \cite[Lem. 2]{curran-1}. One initially has
    \begin{equation}\label{eq:4th_moment_lemma_integral}
        \begin{split}
        I_T(\alpha_1,\dots,\alpha_4)&=\int_T^{2T}\zeta(\half+it+\alpha_1)\zeta(\half+it+\alpha_2)\zeta(\half-it+\alpha_3)\zeta(\half-it+\alpha_4)|A(\half+it)|^2\ d t\\
        &= \frac{1}{(2\pi i)^4}\oint\oint\oint\oint\int_{\R} 
         A(z_1,z_2,-z_3,-z_4)G(z_1,z_2,-z_3,-z_4)\Delta(z_1,z_2,z_3,z_4)^2\\
         &\qquad\qquad\qquad\times\left(\prod_{j=1}^4\frac{\tau^{-\alpha_j/2}}{(z_1-\alpha_j)(z_2-\alpha_j)(z_3+\alpha_j)(z_4+\alpha_j)}\right)\\
         &\qquad\qquad\qquad\qquad\times\tau^{\frac{z_1+z_2-z_3-z_4}{2}}\phi(t/T)\ d t \ dz_1 \ dz_2 \ dz_3 \ dz_4 + O(T^{1-\delta}),
        \end{split}
    \end{equation}
    where $\tau=t/(2\pi)$, $\delta$ is a small positive quantity, the contour for each $z_j$ is an anticlockwise circle $|z_j|=3^{j}/\log T$, and the error term is uniform in $|\alpha_j|\ll 1/\log T$.
    
    By the same argument as in \cite[Lem. 2]{curran-1}, we may differentiate $n$ times with respect to each $\alpha_j$, under the integral and evaluate at $\alpha_1=\dots=\alpha_4=0$, so that
     \begin{equation*}
        \begin{split}
            \frac{\partial^{4n}}{\partial\alpha_1^n\dots \partial\alpha_4^n}\Bigg|_{\alpha_j=0} I_T(\alpha_1,\alpha_2,\alpha_3,\alpha_4)&=\int_T^{2T}|\zeta^{(n)}(\half+it)|^2|A(\half+it)|^2\phi(t/T)\ d t + O(T^{1-\delta}).
        \end{split}
    \end{equation*}
    The integrand in \eqref{eq:4th_moment_lemma_integral} consists of factors independent of the $\alpha_j$, and for each $1\leq j \leq 4$ a factor on the penultimate line depending on $\alpha_j$. Write this factor as
    \begin{equation*}
        \tau^{-\alpha_j/2}(P_4(\alpha_j))^{-1} = \frac{\tau^{-\alpha_j/2}}{(z_1-\alpha_j)(z_2-\alpha_j)(z_3+\alpha_j)(z_4+\alpha_j)},
    \end{equation*}
    so $P_4(\alpha_j)$ is a degree $4$ polynomial in $\alpha_j$ with coefficients depending on $z_1,\dots,z_4$. 
    
    We claim that the $n$th derivative of this with respect to $\alpha_j$ is $\ll (\log T)^{4+n}$. In fact, this derivative equals
    \begin{equation*}
        \sum_{m=0}^n \binom{n}{m}(-\half\log\tau)^{k}\frac{\partial^{n-m}}{\partial\alpha_j^{n-m}}(P_4(\alpha_j)^{-1}),
    \end{equation*}
    so it suffices to show that $\frac{\partial^{n}}{\partial\alpha_j^{n}}(P_4(\alpha_j))^{-1}\ll (\log T)^{4+n}$. A Laurent expansion of $P_4(\alpha_j)^{-1}$ at $\alpha_j=0$ then reveals that its $n$th derivative is a rational function of $z_1,\dots,z_4$ with denominator of degree $\leq n$; since $|z_1|,\dots,|z_4|\gg 1/\log T$, this proves the claim.
   
By choice of contour for the $z_j$, one also has the bounds
    \begin{align*}
        A(z_1,z_2,-z_3,-z_4)\ll \log^4 T, \quad \Delta^2(z_1,z_2,z_3,z_4)\ll (\log T)^{-12}.
    \end{align*}
    Applying Jensen's inequality to the integral expression for $\frac{\partial^{4n}}{\partial \alpha_1^n\dots\partial\alpha_4^n}I_T(\alpha_1,\dots,\alpha_4)$, noting that each contour for $z_j$ has length $\ll 1/\log T$, and collecting logarithms proves \eqref{eq:4th_moment_lemma_integral}.
\end{proof}

\subsection{Applying the moment bounds}

For $1\leq r \leq J$,  let 
\begin{equation*}
    A_r(s):=\left(\prod_{j<r}\fN_j(s,k-2)\right)\cdot\left(\frac{\fP_r(\half+it)}{50P_r}\right)^{\lceil 50 P_r \rceil},
\end{equation*} 
and let $A_{J+1}(s)=\prod_{j<J+1}\fN(s,k-2)$. 
In the proof of Proposition 3 in \cite[Section 6]{heap-radziwill-soundararajan}, it is shown for these choices of $A_r(s)$ that
\begin{align*}
    \max_{|z_j|=3^j/\log T}|G(z_1,z_2,z_3,z_4)|\ll (\log T_r)^{k^2-4}  18^r r! &P_r^r e^{P_r} \textrm{ for } 1\leq r \leq J,\\
    \max_{|z_j|=3^j/\log T}|G(z_1,z_2,z_3,z_4)|\ll (\log T)^{k^2-4}  &\textrm{ for } r = J+1
\end{align*}
respectively.
Note that each $A_r$ has length $\ll T^{1/10}$. Therefore by Lemma \ref{lem:general_twisted_4th}, for each $r$
\begin{equation}\label{eq:4th_moment_final}
    \int_T^{2T}|\zeta^{(n)}(\half+it)|^{4}|A_r(\half+it)|^2\ d t \ll T(\log T)^{4+4n}\max_{|z_j|=3^j/\log T}|G(z_1,\dots,z_4)|.
\end{equation}

To bound the moments of Dirichlet polynomials on the right-hand sides of \eqref{eq:UB_twisted_good}, \eqref{eq:UB_twisted_bad}, we have the bounds from \cite[Proposition 2]{heap-radziwill-soundararajan} (for $1\leq r \leq J$),
\begin{align}
    \int_T^{2T}\prod_{j<r}\left|\fN_j(\half+it,k)\right|^2\left|\frac{\fP_r(\half+it)}{50P_r}\right|^{2\lceil 50 P_r \rceil}\ &d t  \label{eq:G_polynomials}\ll T(\log T_r)^{k^2}(r! P^r_r),\\
    \int_T^{2T}\prod_{j<J+1}\left|\fN_j(\half+it,k)\right|^2\ d t  \label{eq:Br_polynomials}\ll & T(\log T)^{k^2}.
\end{align}
From Mertens' estimates and Stirling's approximation, for $1\leq r \leq J$,
\begin{align*}
   T(\log T_r)^{k^2}(r!P_r^r) \ll T(\log T)^{k^2}&e^{-10P_r},\\
    T(\log T)^4(\log T_r)^{k^2-4}(18^r r! P^r_re^{P_r})\ll & T(\log T)^{k^2}e^{-10P_r}.
\end{align*}
Equations \eqref{eq:zeta_UB_good} and \eqref{eq:zeta_UB_bad} now follow from substituting the moment bounds \eqref{eq:4th_moment_final}--\eqref{eq:Br_polynomials} into \eqref{eq:UB_twisted_good} and \eqref{eq:UB_twisted_bad}. This proves Theorem \ref{thm:mainRH} when $k\leq 2$.

\section{Upper Bounds: Proof of Theorem \ref{thm:mainRH} for $k\geq \half$}
\label{sec:big_h_proof}

It suffices to prove \eqref{eq:RH_Thm.1}.
The starting point of the proof is the formula
\[
\zeta^{(n)}(s) = \frac{1}{n!\cdot 2\pi i} \oint_{|z| = 1/\log T} \frac{\zeta(s+z)}{z^{n+1}} dz.
\]
Applying H\"older's inequality and then switching the order of the integrals gives
\begin{align*}
\I^{(n)}_T(k)&
\ll \int_T^{2T}\left(\oint_{|z|=1/\log T}\frac{|\zeta(\half+it+z)|}{|z|^{n+1}} \ d|z| \right)^{2k}\ dt \\
&\ll \int_T^{2T}\left(\oint_{|z|=1/\log T}|\zeta(\half+it+z)|^{2k} \ d|z|\right)(\log T)^{2nk+1}\ dt \\
&\ll (\log T)^{2nk+1} \oint_{|z|=1/\log T} \int_T^{2T}  |\zeta(\tfrac{1}{2} + i t + z)|^{2k} \ d t\\
&\ll (\log T)^{2nk} \max_{|z|= 1/\log T}  \int_T^{2T} |\zeta(\tfrac{1}{2} + i t + z)|^{2k} \ d t.
\end{align*}
This last integral may now be bounded by $\ll T(\log T)^{k^2}$ uniformly in $|z| = 1/\log T$. 
To see this, we may reduce to the case that $\Re z \geq 0$ by using the functional equation (see Lemma 22 of \cite{FHKConjecture}).
The claim now follows from combining Theorem 1 of  \cite{heap-radziwill-soundararajan} (for $k\leq 2$ unconditionally) or \cite{harper} (assuming RH) and Theorem 7.1 of  \cite{Titchmarsh} with Theorem 2 of \cite{Gabriel}.
While Theorem 2 of Gabriel's paper \cite{Gabriel} only holds for functions analytic in a strip, we may still use his result by approximating the indicator function of the rectangle $1/2 \leq \sigma \leq 3/2$, $T \leq t \leq 2T$ by a suitable analytic function, see Lemma 3.5 of \cite{arguin-ouimet-radziwill}.
Alternatively, one may show this follows by a straightforward modification of the argument in \cite{heap-radziwill-soundararajan} for $0 \leq k\leq 2$ unconditionally, or \cite{harper} for all $k\geq 0$ assuming RH.
This concludes the proof of Theorem \ref{thm:mainRH}. \qed



\begin{thebibliography}{99} 


\bibitem{FHKConjecture}
L.~P. Arguin, P.~Bourgade, and M.~Radziwiłł. The {F}yodorov-{H}iary-{K}eating conjecture. {I}. \emph{ Preprint, arXiv:2007.00988}, 2020.

\bibitem{arguin-ouimet-radziwill}
L.~P. Arguin, F.~Ouimet, and M.~Radziwiłł.
  Moments of the {R}iemann zeta function on short intervals of the
  critical line.
   \emph{ Ann. Probab.}, 49(6):3106--3141, 2021.

\bibitem{AKW}
T.~Assiotis, J.~P. Keating, and J.~Warren.
  On the joint moments of the characteristic polynomials of random
  unitary matrices.
   \emph{ Preprint, arXiv:2005.13961}, 2020.

\bibitem{BBBCPRS}
E.~C. Bailey, S.~Bettin, G.~Blower, J.~B. Conrey, A.~Prokhorov, M.~O.
  Rubinstein, and N.~C. Snaith.
  Mixed moments of characteristic polynomials of random unitary
  matrices.
   \emph{ J. Math. Phys.}, 60(8), 2019.

\bibitem{P5}
E.~Basor, R.~Bleher, P.~Buckingham, T.~Grava, A.~Its, E.~Its, and J.~P.
  Keating.
  A representation of joint moments of {CUE} characteristic polynomials
  in terms of {P}ainlevé functions.
   \emph{ Nonlinearity}, 32:4033, 2019.

\bibitem{bui-florea}
H.~M. Bui and A.~Florea.
  Negative moments of the {R}iemann zeta-function.
   \emph{ Preprint, arXiv:2302.07226}, 2023.

\bibitem{Conrey}
J.~B. Conrey.
  The fourth moment of derivatives of the {R}iemann zeta-function.
   \emph{ Q. J. Math.}, 39:21--36, 1988.

\bibitem{curran-1}
M.~J. Curran.
  Upper bounds for fractional joint moments of the {R}iemann zeta
  function.
   \emph{ Acta Arith.}, 204:83--96, 2022.

\bibitem{FW}
P.~J. Forrester and N.~S. Witte.
  Application of the {$\tau$}-function theory of {P}ainlevé equations
  to random matrices: {$P_{VI}$}, the {JUE, CyUE, cJUE} and scaled limits.
   \emph{ Nagoya Math. J.}, 174:29--114, 2004.

\bibitem{Gabriel}
R.~M. Gabriel.
  Some results concerning the integrals of moduli of regular functions
  along certain curves.
   \emph{ J. London Math. Soc.}, 2:112--117, 1972.

\bibitem{hall}
R.~R. Hall.
  A {W}irtinger type inequality and the spacing of the zeros of the
  {R}iemann zeta-function.
   \emph{ J. Number Theory}, 93(2):235--245, 2002.

\bibitem{harper}
A.~J. Harper.
  Sharp conditional bounds for moments of the {R}iemann zeta function.
   \emph{ Preprint, arXiv:1305.4618}, 2013.

\bibitem{heap-radziwill-soundararajan}
W.~Heap, M.~Radziwi{\l}{\l}, and K.~Soundararajan.
  Sharp upper bounds for fractional moments of the {R}iemann zeta
  function.
   \emph{ Q. J. Math.}, 70(4):1387--1396, 2019.

\bibitem{heap-soundararajan}
W.~Heap and K.~Soundararajan.
  Lower bounds for moments of zeta and {$L$}-functions revisited.
   \emph{ Preprint, arXiv:2007.13154}, 2020.

\bibitem{Hughes}
C.~P. Hughes.
  On the characteristic polynomial of a random unitary matrix and the
  {R}iemann zeta function.
   \emph{ PhD thesis, University of Bristol}, 2000.

\bibitem{Ingham}
A.~E. Ingham.
  Mean-value theorems in the theory of the {R}iemann zeta function.
   \emph{ Proc. London Math. Soc.}, 27:273--300, 1926.

\bibitem{KS2}
J.~P. Keating and N.~C. Snaith.
  Random matrix theory and {$L$}-functions at $s = \tfrac{1}{2}$.
   \emph{ Comm. Math. Phys.}, 214(1):91--100, 2000.

\bibitem{KS1}
J.~P. Keating and N.~C. Snaith.
  Random matrix theory and $\zeta(\tfrac{1}{2}+i t)$.
   \emph{ Comm. Math. Phys.}, 214:57--89, 2000.

\bibitem{keating-wei-1}
J.~P. Keating and F.~Wei.
  Joint moments of higher order derivatives of {CUE} characteristic
  polynomials {I}: asymptotic formulae.
   \emph{ Preprint, arXiv:2307.01625}, 2023.

\bibitem{keating-wei-2}
J.~P. Keating and F.~Wei.
  Joint moments of higher order derivatives of {CUE} characteristic
  polynomials {II}: structures, recursive relations, and applications.
   \emph{ Preprint, arXiv:2307.02831}, 2023.

\bibitem{Mili}
M.~B. Milinovich.
  Moments of the {R}iemann zeta-function at its relative extrema on the
  critical line.
   \emph{ Bull. Lond. Math. Soc.}, 43:1119–1129, 2011.

\bibitem{radziwill-soundararajan}
M.~Radziwi{\l}{\l} and K.~Soundararajan.
  Moments and distribution of central {$L$}-values of quadratic twists
  of elliptic curves.
   \emph{ Invent. Math.}, 202(3):1029--1068, 2015.

\bibitem{Titchmarsh}
E.~C. Titchmarsh.
   \emph{ The Theory of the {R}iemann Zeta-Function}.
  Oxford University Press, New York, 1986.

\end{thebibliography}
\end{document}